\newtheorem{thm}{Theorem}
\newtheorem{prop}[thm]{Proposition}
\newtheorem{cor}[thm]{Corollary}
\newtheorem{lem}[thm]{Lemma}
\newtheorem{rmk}[thm]{Remark}
\newcommand\I{\mathrm i}
\newcommand\e{\mathrm e}
\renewcommand\Re{\operatorname{Re}}
\renewcommand\Im{\operatorname{Im}}
\newcommand\sgn{\operatorname{sgn}}
\newcommand\diag{\operatorname{diag}}
\begin{document}

	\label{'ubf'}
\setcounter{page}{1} 

\markboth {\hspace*{-9mm} \centerline{\footnotesize \sc
     Signed Cycle Graphs
}
                 }
                { \centerline {\footnotesize \sc
 A.M.\ Mathai and Thomas Zaslavsky} \hspace*{-9mm}
               }
\begin{center}
{
       {\Large \textbf { \sc  On Adjacency Matrices and Descriptors of Signed Cycle Graphs}
                               }
       }
\\

\medskip

A.M.\ Mathai \footnote{Centre for Mathematical Sciences, Pala Campus, 
 Arunapuram P.O, Palai, Kerala 686 574, India, 
 and McGill University, Canada.
 cmspala@gmail.com; www.cmskerala.org; mathai@math.mcgill.ca}
 
 and
 
Thomas Zaslavsky \footnote{Department of Mathematical Sciences, Binghamton University, 
Binghamton, NY 13905-6000, U.S.A.
zaslav@math.binghamton.edu}
\end{center}

\begin{abstract}
This paper deals with adjacency matrices of signed cycle graphs and chemical descriptors based on them.  The eigenvalues and eigenvectors of the matrices are calculated and their efficacy in classifying different signed cycles is determined.  The efficacy of some numerical indices is also examined.
\end{abstract}
\small
\noindent \textbf{Mathematics Subject Classification 2010:} Primary 05C22; Secondary 05C50, 05C90 \\
\textbf{Keywords:} signed graphs, adjacency matrices, eigenvalues, eigenvectors, proteomics
\large

\section{Introduction}
When graph theory is used to classify items such as chemicals it is usually done by identifying vertices and edges of a graph with various descriptors of the chemical under consideration and then devising an index based on the corresponding molecular graph under consideration. Usually a single number index is proposed. Here we will examine indices based on adjacency matrices. An index in current use is the Wiener index $W$ \cite{Refwiener47}, $\frac{1}{2}$ of the sum of all the elements in the distance matrix $D.$ Other descriptors proposed in \cite{RefMathai07} are the norms of $D.$ For example, 
\begin{equation}
\begin{aligned}
N_1&=\Vert D\Vert_1=\max_i\sum_j|d_{ij}|,\\
N_2&=\Vert D\Vert_2=\max_j\sum_i|d_{ij}|,\\
N_3&=\sqrt{\lambda},
\label{N}
\end{aligned}
\end{equation}
where $\lambda$ is the largest eigenvalue of $DD',$ $D'$ being the transpose of $D.$

Graph theory techniques are used in proteomics maps to identify or classify protein spots \cite{RefBMGH03, RefBG97}. 
In the present article we look into the problem of unique determination of the underlying pattern in a signed cycle graph (that is, a simple cycle with signed edges) through its adjacency matrix.

\section{Adjacency matrices} \label{sec:1}
A \emph{signed graph} $(G,\boldsymbol\sigma)$ is a graph $G$ with a sign function $\boldsymbol\sigma : E(G) \to \{+1,-1\}$.  
We will examine properties of some `patterned matrices' first and then these matrices will be associated with adjacency matrices of signed cycle graphs. This procedure will yield a number of results on eigenvalues and eigenvectors of adjacency matrices of  signed cycles.

Consider the following $n\times n$ circulant matrix:
\begin{equation}
B=\begin{bmatrix}
0&1&0&\dots&0&0&0\\
0&0&1&\ldots&0&0&0\\
\vdots&\vdots&\vdots&\dots&\vdots&\vdots&\vdots\\
0&0&0&\dots&0&0&1\cr 1&0&0&\dots&0&0&0\end{bmatrix}.
\label{B}
\end{equation}
Note that $B$ is real and an orthonormal matrix. $BB'=I \Rightarrow B'=B^{-1}$, where the prime denotes the transpose. Then some interesting properties follow. First, $B' = B^{n-1} = B^{-1}$. If $\nu$ is an eigenvalue of $B$ then $\nu^{-1}$ is an eigenvalue of $B^{-1}$ sharing the same eigenvectors. Hence, if we consider $A=B+B'=B+B^{-1}$ then an eigenvalue of $A$ is of the form $\lambda = \nu+\nu^{-1}$ where $\nu$ is an eigenvalue of $B$. Note that $A$ is the following patterned matrix:
\begin{equation}
A=\begin{bmatrix}
0&1&0&\dots&0&0&1\\
1&0&1&\dots&0&0&0\\
\vdots&\vdots&\vdots&\dots&\vdots&\vdots&\vdots\\
0&0&0&\dots&1&0&1\\
1&0&0&\dots&0&1&0\end{bmatrix}=B+B'=B+B^{-1}.
\label{A}
\end{equation}
But $A$ is the adjacency matrix $A(C_n)$ of a simple cycle. 

Now we consider matrices obtained from $B$ by multiplying columns by $-1$. Suppose that the $i$th column of $B$ is multiplied by $\sigma_i = (-1)^{\beta_i}$, $\beta_i\in \{0, 1\}$, so that a total of $r$ columns are multiplied by $-1$, that is, $r = \beta_{n}+\cdots+\beta_1$. Call the new matrix $B_\beta$, where $\beta = (\beta_1,\beta_2, \ldots,\beta_n)$. $B_\beta$ is still an orthonormal matrix. If $\nu$ is an eigenvalue of $B_\beta$ and $X=(z_1,\ldots,z_n)'$ is the corresponding eigenvector then 
$$B_\beta X=\nu X.$$ 
Writing the eigenvector elements explicitly we have 
\begin{equation}
\begin{aligned}
z_n &= \sigma_n \nu z_{n-1} = \sigma_n \nu \sigma_{n-1} \nu z_{n-2} = \cdots \\
&= \sigma_n \sigma_{n-1} \cdots \sigma_2 \nu^{n-1} z_1 = \sigma z_n \\
& \Rightarrow \nu^n = \sigma, 
\end{aligned}
\label{eigenelements}
\end{equation}
where 
$$\sigma = \sigma_1\sigma_2\cdots\sigma_n = \boldsymbol\sigma(C_n) = (-1)^r,$$
which means that $\nu$ is an $n$th root of $+1$ if $r$ is even and an $n$th root of $-1$ if $r$ is odd. The roots are given by 
$$\e^{\I\pi\frac{2j}{n}}, \ j=0,1,\ldots,n-1,$$ 
when $r$ is even, and 
$$\e^{\I\pi\frac{2j+1}{n}}, \ j=0,1,\ldots,n-1,$$ 
when $r$ is odd. 

Let $s \in \{0,1\}$ be such that $\sigma = (-1)^s$; then the roots are
\begin{equation}
\e^{\I\pi \frac{2j+s}{n}}, \ \ j=0,1,\ldots,n-1,
\label{roots}
\end{equation}
in all cases.
Note that $-1$ is always a root when $n$ is odd or $r$ is odd ($s=1$).  

Let $A_\beta = B_\beta+B_\beta'^{-1} = B_\beta + B_\beta^{-1}$.  Let the cycle $C_n$ have edges $e_1, e_2, \ldots, e_n$, in cyclic order.  Note that $B_{(0,\ldots,0)} = B$ and $A_{(0,\ldots,0)}=A = A(C_n)$.  
The connection between matrices $B_\beta$, $A_\beta$ and signed graphs is the following fact.

\begin{prop}\label{adjacency}
The matrix $A_\beta,$ corresponding to a choice of indices $(\beta_1,\beta_2,\ldots,\beta_n)$ is the adjacency matrix $A(C_n,\boldsymbol\sigma)$ of the signed cycle $(C_n,\boldsymbol\sigma)$ where the edge $e_i$ has sign $\boldsymbol\sigma(e_i) = \sigma_i = (-1)^{\beta_i}$.
\end{prop}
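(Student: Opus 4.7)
The plan is to turn $B_\beta$ into a concrete matrix product, compute $A_\beta = B_\beta + B_\beta^{-1}$ entrywise, and match the result against the signed adjacency matrix. First I would observe that multiplying the $i$th column of $B$ by $\sigma_i$ amounts to right-multiplication by the diagonal sign matrix $D = \diag(\sigma_1,\ldots,\sigma_n)$, so $B_\beta = BD$. Since $D^2 = I$ and $B$ is orthogonal, $B_\beta B_\beta' = BD\,DB' = I$, confirming the orthogonality noted in the text and giving $B_\beta^{-1} = B_\beta' = DB'$; hence $A_\beta = BD + DB'$.

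Next I would expand this sum entry by entry. Each row $i$ of $B$ carries a single $1$ in column $i+1$ (with indices read mod $n$), so $BD$ has the entry $\sigma_{i+1}$ in that position and zeros elsewhere, while $DB' = (BD)'$ places the same value in position $(i+1,i)$. For $n \geq 3$ these two supports are disjoint, so $A_\beta$ is the symmetric matrix whose nonzero entries lie exactly on the cyclic super- and sub-diagonals, with $(A_\beta)_{i,i+1} = (A_\beta)_{i+1,i} = \sigma_{i+1}$ (cyclically).

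To conclude I would compare with $A(C_n, \boldsymbol\sigma)$, whose $(i,j)$ entry is $\boldsymbol\sigma(v_iv_j)$ when $v_iv_j$ is an edge and $0$ otherwise. Under the cyclic labeling in which $e_i$ denotes the edge joining vertices $i-1$ and $i$ (mod $n$), the nonzero entries of $A_\beta$ computed above are exactly $\boldsymbol\sigma(e_i) = \sigma_i$ in the expected positions, as claimed. The argument reduces to direct verification; the only real obstacle is fixing the cyclic labeling of the edges compatibly with the indexing of the columns of $B$. Once that convention is pinned down, reading off the entries finishes the proof.
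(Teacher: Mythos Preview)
Your argument is correct: writing $B_\beta = BD$ with $D = \diag(\sigma_1,\ldots,\sigma_n)$ and then reading off the entries of $A_\beta = BD + DB'$ is exactly the right direct verification, and your edge-labeling convention ($e_i$ joining vertices $i-1$ and $i$) is the one compatible with the paper's later Lemma~\ref{Bpowers}, where $(B_\beta)_{i,j} = \sigma_j\delta_{i+1,j}$.

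As for comparison with the paper: there is nothing to compare, since the paper states Proposition~\ref{adjacency} as a fact without proof and moves directly to Theorem~\ref{thm1}. Your write-up supplies the missing (and straightforward) details.
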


Hence, we obtain the eigenvalues of $A(C_n,\boldsymbol\sigma)$ from the following theorem.  (The eigenvalues were previously obtained by Acharya \cite[page 205]{bda} and Fan \cite[Proposition 2.2]{yf}, using different methods.)

\begin{thm}\label{thm1}  The patterned matrix $A_\beta$ has the eigenvalues 
$$\mu^\beta_j = 2\cos\frac{2j+s}{n}\pi , \ j=0,1,\ldots,n-1,$$ 
which equal 
$$2\cos\frac{2 j}{n}\pi, \ j=0,1,\ldots,n-1,$$ 
if $r$ is even ($\boldsymbol\sigma(C_n)=+1$), and 
$$2\cos\frac{2j+1}{n}\pi, \ j=0,1,\ldots,n-1,$$
if $r$ is odd ($\boldsymbol\sigma(C_n)=-1$).
 \end{thm}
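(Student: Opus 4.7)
The plan is to leverage directly the eigenvalue computation for $B_\beta$ carried out in equation (\ref{eigenelements}) and the list (\ref{roots}), and pass from the eigenvalues of $B_\beta$ to those of $A_\beta = B_\beta + B_\beta^{-1}$ via a shared eigenvector argument.

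First I would observe that $B_\beta$ is orthogonal, hence normal, so it is diagonalisable over $\mathbb{C}$, and that any eigenvector $X$ of $B_\beta$ with eigenvalue $\nu$ is automatically an eigenvector of $B_\beta^{-1}$ with eigenvalue $\nu^{-1}$. Consequently such an $X$ satisfies
\begin{equation*}
A_\beta X = (B_\beta + B_\beta^{-1})X = (\nu + \nu^{-1})X,
\end{equation*}
so $\nu + \nu^{-1}$ is an eigenvalue of $A_\beta$. Since the values $\nu_j = \e^{\I\pi(2j+s)/n}$, $j=0,\ldots,n-1$, given by (\ref{roots}) are $n$ distinct complex numbers and the eigenvectors for distinct eigenvalues of the diagonalisable matrix $B_\beta$ are linearly independent, this accounts for all $n$ eigenvalues of $A_\beta$ (counted with multiplicity).

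Next I would compute
\begin{equation*}
\nu_j + \nu_j^{-1} = \e^{\I\pi(2j+s)/n} + \e^{-\I\pi(2j+s)/n} = 2\cos\frac{(2j+s)\pi}{n},
\end{equation*}
which is the claimed formula $\mu_j^\beta$. The two specialisations in the theorem then follow simply by substituting $s=0$ when $r$ is even ($\boldsymbol\sigma(C_n)=+1$) and $s=1$ when $r$ is odd ($\boldsymbol\sigma(C_n)=-1$), as fixed in the definition of $s$ preceding (\ref{roots}).

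There is essentially no genuine obstacle; the only point that merits a remark is that the list $\{\mu_j^\beta : j=0,\ldots,n-1\}$ naturally contains repeated values (the conjugate pair $\nu_j, \nu_{-j \bmod n}$ yields the same cosine), but this is consistent with the fact that $A_\beta$ is a real symmetric $n\times n$ matrix whose non-extreme eigenvalues must appear with even multiplicity corresponding to pairs of complex eigenvalues of $B_\beta$ on the unit circle.
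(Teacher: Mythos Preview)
Your proposal is correct and follows essentially the same approach as the paper, which simply cites equation~\eqref{roots} after having already observed that $A_\beta = B_\beta + B_\beta^{-1}$ has eigenvalues of the form $\nu + \nu^{-1}$. Your version is in fact more careful, since you explicitly justify (via normality/diagonalisability of $B_\beta$) that the $n$ values $\nu_j + \nu_j^{-1}$ exhaust the spectrum of $A_\beta$ with the right multiplicities, a point the paper leaves implicit.
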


\begin{proof}  The proof follows from equation \eqref{roots}.
\end{proof}

\begin{rmk}\label{pairs}{\rm 
The eigenvalues of $A_\beta$ depend only on $s$ and $n$.  

Since $\mu^\beta_{n-j-s} = \mu^\beta_j$, the eigenvalues of $A_\beta$ have multiplicity $2$, with the exception of the simple eigenvalues $\mu^\beta_0=2$ when $s=0$ (that is, $\sigma=+1$), and $\mu^\beta_{\frac{n-s}{2}}=-2$ when $n-s$ is even (that is, when $\sigma=(-1)^n$; equivalently, $\sigma=+1$ and $n$ is even, or $\sigma=-1$ and $n$ is odd).}
\end{rmk}

\begin{cor}\label{cor2}  
$2$ is an eigenvalue of $A_\beta$ when $r$ is even for all $n$, and $-2$ is an eigenvalue of $A_\beta$ when $r$ and $n$ are odd.
\end{cor}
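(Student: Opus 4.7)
The plan is to read off both claims as direct specializations of Theorem~\ref{thm1} by choosing an appropriate index $j \in \{0,1,\ldots,n-1\}$ so that the angle $\frac{2j+s}{n}\pi$ lands on a multiple of $\pi$, where $\cos$ takes the values $\pm 1$.

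First I would handle the eigenvalue $2$. Since $r$ even means $s = 0$, taking $j = 0$ in Theorem~\ref{thm1} yields the eigenvalue $2\cos 0 = 2$, independently of $n$. This handles the first claim with no further argument.

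Next I would handle $-2$. For $-2 = 2\cos\theta$ we need $\theta$ to be an odd integer multiple of $\pi$; in the parametrization of Theorem~\ref{thm1} this means $\frac{2j+s}{n}$ must be an odd integer for some $j \in \{0,1,\ldots,n-1\}$. With $r$ odd, we have $s = 1$, and I would choose $j = \frac{n-1}{2}$, which is a valid integer index precisely because $n$ is odd. Then $\frac{2j+s}{n}\pi = \frac{n}{n}\pi = \pi$, giving the eigenvalue $2\cos\pi = -2$.

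There is no real obstacle here, as both statements are immediate from Theorem~\ref{thm1}; the only point to verify is that the chosen index $j$ lies in the admissible range, which I would note explicitly in each case. The corollary could alternatively be extracted from Remark~\ref{pairs}, but the direct substitution into the formula of Theorem~\ref{thm1} seems cleanest.
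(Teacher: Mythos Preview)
Your proposal is correct and matches the paper's (implicit) approach: the corollary is stated without proof in the paper, and the intended justification is precisely the direct specialization of Theorem~\ref{thm1} that you carry out, choosing $j=0$ when $s=0$ and $j=\tfrac{n-1}{2}$ when $s=1$ and $n$ is odd. Your care in checking that the chosen $j$ lies in $\{0,\ldots,n-1\}$ is appropriate and nothing more is needed.
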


\begin{cor}\label{cor3} 
The characteristic polynomial of $A_\beta$, denoted by $P_s(\lambda)$, depends only on $s$ and is given by
\begin{align*}
P_1(\lambda)&=\prod_{j=0}^{n-1} \Big(\lambda-2\cos\frac{2j+1}{n}\pi\Big), 
\\
P_0(\lambda)&=\prod_{j=0}^{n-1} \Big(\lambda-2\cos\frac{2j}{n}\pi\Big).
\end{align*}
\end{cor}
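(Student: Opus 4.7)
The plan is to read Corollary \ref{cor3} as essentially a direct consequence of Theorem \ref{thm1}. Since $A_\beta$ is an $n\times n$ real matrix, its characteristic polynomial $P_s(\lambda)=\det(\lambda I - A_\beta)$ is a monic polynomial of degree $n$, and it factors as $\prod_{j=1}^{n}(\lambda-\lambda_j)$ where $\lambda_1,\dots,\lambda_n$ are the eigenvalues listed with algebraic multiplicity. Theorem \ref{thm1} provides exactly $n$ values $\mu^\beta_j=2\cos\frac{2j+s}{n}\pi$, so if these $n$ values account for the full spectrum with multiplicity, the stated product formulas for $P_0$ and $P_1$ follow at once by splitting on the two cases $s=0$ and $s=1$.

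The step that warrants care is justifying that the list in Theorem \ref{thm1} does enumerate the eigenvalues with correct multiplicity, not merely as a set. I would address this by going back to the construction in \eqref{eigenelements}: each of the $n$ distinct $n$th roots $\nu_j=\e^{\I\pi(2j+s)/n}$, $j=0,\ldots,n-1$, yields via the recursion $z_{k+1}=\sigma_{k+1}^{-1}\nu_j z_k$ a nonzero eigenvector $X_j$ of $B_\beta$ with eigenvalue $\nu_j$. Because the $\nu_j$ are pairwise distinct, the $X_j$ are linearly independent, so they form a basis of $\mathbb{C}^n$ that simultaneously diagonalizes $B_\beta$ and $B_\beta^{-1}$, hence also $A_\beta=B_\beta+B_\beta^{-1}$. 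Thus $A_\beta$ has the eigenvalues $\mu^\beta_j=\nu_j+\nu_j^{-1}=2\cos\frac{(2j+s)\pi}{n}$, counted with multiplicity, for $j=0,\ldots,n-1$, coincidences among these values being exactly the multiplicities predicted in Remark \ref{pairs}.

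With the multiplicity issue settled, the conclusion is immediate: $P_s(\lambda)=\prod_{j=0}^{n-1}\bigl(\lambda-\mu^\beta_j\bigr)$, and substituting $s=0$ and $s=1$ yields the two displayed formulas. In particular, since the right-hand sides depend only on $s$ and $n$, the characteristic polynomial is independent of the specific sign pattern $\beta$ beyond its parity $s=\beta_1+\cdots+\beta_n \pmod 2$, giving the ``depends only on $s$'' statement for free. The only genuine work is the multiplicity argument above; the rest is bookkeeping.
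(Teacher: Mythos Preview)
Your proposal is correct and follows the same route as the paper, which treats Corollary~\ref{cor3} as an immediate consequence of Theorem~\ref{thm1} without a separate proof. Your explicit multiplicity argument---diagonalizing $B_\beta$ via the $n$ distinct roots $\nu_j$ and hence $A_\beta=B_\beta+B_\beta^{-1}$ simultaneously---fills in a detail the paper leaves implicit, but the underlying idea is the same.
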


\begin{rmk}\label{rmk-trace}{\rm Since the trace of $A$ is zero the sum of the eigenvalues is zero.}
\end{rmk}

\begin{rmk}\label{rmk-eigen}{\rm If $n$ is even and $\lambda$ is an eigenvalue of $A$, then $-\lambda$ is also an eigenvalue of $A$.}
\end{rmk}

\begin{rmk}\label{rmk-neg}{\rm If $n$ is even then $A$ and $-A$ have the same eigenvalues.  The corresponding graph-theory statement is that if $n$ is even, negating every edge of $(C_n,\boldsymbol\sigma)$ does not change the sign $\sigma$ of $C_n$.}
\end{rmk}

\begin{rmk}\label{rmk-number}{\rm There will be $\frac{1}{2}2^n=2^{n-1}$ matrices $A_\beta$ with even $r$, having eigenvalues $2\cos\frac{2j\pi}{n}$, $j=0, 1, \ldots, n-1$, and $2^{n-1}$ matrices $A_\beta$ with odd $r$, having eigenvalues $2\cos(2j+1)\frac{\pi}{n}$, $j=0, 1, \ldots, n-1$.} 
\end{rmk}

For example, for $n=3,$ the four matrices $A_\beta$ with even $r$ have eigenvalues $2, -\sqrt{3}, -\sqrt{3}$.  The four matrices $A_\beta$ with odd $r$ have eigenvalues $\sqrt{3}, \sqrt{3}, -2$.

For $n=4,$ the eight matrices $A_\beta$ with even $r$ have eigenvalues $2$, $0$, $0$, $-2$.  The eight matrices $A_\beta$ with odd $r$ have eigenvalues $\sqrt{2}$, $\sqrt{2}$, $-\sqrt{2}$, $-\sqrt{2}.$

\section{Classification by descriptors} \label{sec:5}
Any system of classification that determines the signs of the edges in a signed cycle will uniquely determine the adjacency matrix.  
One of the criteria for classification, frequently used in the literature, is the Wiener index, which is the sum of the 
elements in the matrix under consideration. In the case of the adjacency matrices discussed above, $2^n$ in number, the Wiener index will be $2n-2r = 2n, 2n-2, \ldots, -2n$. Out of these only $2n$ and $-2n$ will uniquely determine the adjacency matrix. 
Another popular criterion is the sum of the absolute values of the eigenvalues of the matrix, the so-called \emph{energy}. In the case of a signed cycle, the eigenvalues are roots of unity and hence this sum is $n$. 
Other criteria suggested by this author are the norms of the matrix. Some of the standard norms for a matrix $A=(a_{ij})$ are $N_1, N_2, N_3$ as defined in \eqref{N}. For the adjacency matrix of a signed cycle, $N_1=N_2=2$ and hence $N_1$ and $N_2$ are poor criteria for classification. 
What about $N_3$?  In our case, where $A = A_\beta$ and $A=A',$ we have $AA'=A^2.$ 
If $\lambda$ is an eigenvalue of $A$ then $A^2$ has the eigenvalue $\lambda^2$; thus, $N_3$ equals the largest absolute value of an eigenvalue of $A$.  When either $2$ or $-2$ is an eigenvalue of $A$, $N_3 = 2.$  By Corollary \ref{cor2} that is not the case only when $n$ is odd and $r$ is even, when the largest absolute value of an eigenvalue is $2 \cos \frac{\pi}{n}$.  Therefore, $N_3$ can at most distinguish whether $r$ is even or odd.
Hence such single number criterion based on the eigenvalues will be poor in identifying the corresponding graphs.  The most efficacious one is known to be the Weiner index, which determines the value of $r$ given that of $n$ but cannot determine precisely which edges have positive or negative signs.

The spectrum or characteristic polynomial of $A(C_n,\boldsymbol\sigma)$ is less effective than the Weiner index as it determines only the value of $n$ and the parity of $r$.  It cannot distinguish more than that because it is determined by the order and sign of $(C_n,\boldsymbol\sigma)$, which are $n$ and $\boldsymbol\sigma(C_n)=(-1)^r$.  This can be explained by \emph{switching}.  Switching a vertex $v$ in a signed graph means reversing the signs of all edges incident with that vertex; it has the effect of multiplying the row and column of $v$ in $A(C_n,\boldsymbol\sigma)$ by $-1;$ in other words, $A(C_n,\boldsymbol\sigma)$ changes to $DA(C_n,\boldsymbol\sigma)D$ where $D$ is a diagonal matrix with diagonal $+1$ except for $-1$ in the row of $v$.  It is known that the spectrum is not changed by switching, and it is known that any two signings of $C_n$ with the same sign $(-1)^r$, that is the same parity of $r$, can be changed into each other by switching some set of vertices \cite{tz}.

However, the eigenvectors can be used to refine the classification.  Suppose a set $W$ of vertices is switched.  Let $D$ be the diagonal matrix with $-1$ in the rows of vertices in $W$ and $+1$ elsewhere.  Switching changes the adjacency matrix $A$ to $A_W=DAD$.  The eigenvector associated to an eigenvalue $\nu$ satisfies $AX = \nu X$ and therefore
$$A_W(DX) = \nu (DX).$$ The eigenvector after switching has elements corresponding to $W$ negated; this enables the signed graphs before and after switching to be distinguished.  Hence, the eigenvectors can distinguish the different signings of a simple cycle.  This general statement requires a proof, and there will be exceptional cases where zero is an element in an eigenvector.  Thus, we state a theorem.  

Recall that the cycle has edges $e_1,e_2,\ldots,e_n$ in cyclic order with signs $\sigma_i = \boldsymbol\sigma(e_i)$.  We show that an eigenvector of $A_\beta$ determines the edge signs.  Let the eigenvalues and eigenvectors of $A_\beta$ be $\mu^\beta_j = 2 \cos \frac{2j+s}{n} \pi$ and $x^\beta_j = (x^\beta_{j1},x^\beta_{j2},\ldots,x^\beta_{jn}) = (x^\beta_{jk})_{k=1}^n$, $j = 0,1,2,\ldots, n-1$.

\begin{thm}\label{eigenvectors}
Two independent eigenvectors of $A_\beta$ associated to the eigenvalue $\mu^\beta_j = 2 \cos \frac{2j+s}{n} \pi = \mu^\beta_{n-s-j}$, $j = 0,1,2,\ldots, n-1$, are $x^\beta_j$ and $y^\beta_j$ given by
$$x^\beta_{jk} = \cos \frac{(2j+s)k}{n}\pi$$
and
$$y^\beta_{j,k} = \sin \frac{(2j+s)k}{n}\pi$$
for $j = 1,2,\ldots, \lfloor\frac{n}{2}\rfloor$, and the eigenspace associated to $\mu^\beta_j$ is the span of $x^\beta_j$ and $y^\beta_j$; with the following exceptions.

When $\sigma=+1$ ($r$ is even, $s=0$), the eigenvectors of $A_\beta$ associated to the eigenvalue $\mu^\beta_0 = 2$ are the scalar multiples of  
$$x^\beta_0=(\sigma_1,\sigma_1\sigma_2,\ldots,\sigma_1\cdots\sigma_{n-1},\sigma_1\cdots\sigma_n)',$$ 
in which $x^\beta_{0k} = \sigma_1\cdots\sigma_k.$
If $n$ is also even, $n=2m$, then the eigenvectors of $A_\beta$ associated to the eigenvalue $\mu^\beta_m = -2$ are the scalar multiples of  
$$x^\beta_m=(-\sigma_1,\sigma_1\sigma_2,\ldots,-\sigma_1\cdots\sigma_{n-1},\sigma_1\cdots\sigma_n)',$$ 
in which $x^\beta_{mk} = (-1)^k\sigma_1\cdots\sigma_k.$

When $\sigma=-1$ ($r$ is odd, $s=1$) and $n$ is odd, $n=2m+1$, the eigenvectors of $A_\beta$ associated to the eigenvalue $\mu^\beta_m = -2$ are the scalar multiples of  
$$x^\beta_m=(-\sigma_1,\sigma_1\sigma_2,\ldots,-\sigma_1\cdots\sigma_{n-1},\sigma_1\cdots\sigma_n)',$$ 
in which $x^\beta_{mk} = (-1)^k\sigma_1\cdots\sigma_k.$
\end{thm}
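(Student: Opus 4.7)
The plan is to leverage the relation $A_\beta = B_\beta + B_\beta^{-1}$. Since $B_\beta$ commutes with its inverse, any eigenvector of $B_\beta$ with eigenvalue $\nu$ is automatically an eigenvector of $A_\beta$ with eigenvalue $\nu + \nu^{-1}$. I would first write down the $B_\beta$-eigenvectors explicitly using the recurrence already derived in equation~\eqref{eigenelements}, and then pass to $A_\beta$ by taking real combinations of complex-conjugate pairs.

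The recurrence $z_k = \sigma_k \nu z_{k-1}$ from \eqref{eigenelements}, with the normalization $z_1 = \sigma_1$, yields the $B_\beta$-eigenvector
\[
z_k = \sigma_1 \sigma_2 \cdots \sigma_k\, \nu^{k-1},
\]
one for each root $\nu$ of $\nu^n = \sigma = (-1)^s$. Since the $n$ roots listed in \eqref{roots} are distinct, each such $\nu$ yields a one-dimensional $B_\beta$-eigenspace described by this formula. Now fix $\mu^\beta_j = 2\cos\frac{(2j+s)\pi}{n}$ different from $\pm 2$. The two distinct $B_\beta$-eigenvalues $\nu_\pm = \e^{\pm\I(2j+s)\pi/n}$ both produce $\mu^\beta_j$, and their $B_\beta$-eigenvectors are linearly independent, so they jointly span the $\mu^\beta_j$-eigenspace of $A_\beta$. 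Rescaling by the overall complex factor $\nu_+$ to shift the exponent, one obtains the complex eigenvector with entries $\sigma_1\cdots\sigma_k \nu_+^k$, whose real and imaginary parts, $\sigma_1\cdots\sigma_k \cos\frac{k(2j+s)\pi}{n}$ and $\sigma_1\cdots\sigma_k \sin\frac{k(2j+s)\pi}{n}$, are two real eigenvectors of $A_\beta$ in this eigenspace. Their linear independence over $\mathbb R$ follows from that of the complex pair for $\nu_\pm$.

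The exceptional cases correspond precisely to $\nu = \pm 1$, where $\nu_+ = \nu_-$ and only one $B_\beta$-eigenvector is available. The case $\nu = 1$ forces $s = 0$ and directly gives $z_k = \sigma_1 \cdots \sigma_k$, matching $x^\beta_0$. The case $\nu = -1$ requires $n - s$ to be even, which as Remark~\ref{pairs} records corresponds to either $s = 0$ with $n$ even or $s = 1$ with $n$ odd; the formula then yields $z_k = \sigma_1 \cdots \sigma_k (-1)^{k-1}$, a scalar multiple of the stated $x^\beta_m$ with entries $(-1)^k \sigma_1 \cdots \sigma_k$. In both exceptional cases the sine partner would vanish identically, which is why the eigenspace collapses to one dimension, matching the simpleness of $\pm 2$ already observed.

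The principal obstacle I anticipate is bookkeeping rather than conceptual difficulty: one must verify that, as $j$ runs over its stated range, each two-dimensional eigenspace is covered exactly once (without double-counting the pair $\mu^\beta_j = \mu^\beta_{n-s-j}$) and that the generic pair $(x^\beta_j, y^\beta_j)$ is honestly independent (neither vector vanishes and they are not proportional). Both points reduce to elementary trigonometric identities once the phase conventions have been fixed, and the resulting $\sigma_1 \cdots \sigma_k$ prefactor produced by the recurrence then uniformly reconciles the generic formulas with the exceptional ones listed in the theorem.
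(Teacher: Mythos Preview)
Your approach is essentially the same as the paper's: compute the complex eigenvectors of $B_\beta$ from the recurrence, observe they are eigenvectors of $A_\beta=B_\beta+B_\beta^{-1}$, and take real and imaginary parts, with the degenerate cases arising exactly when $\nu=\pm1$. The paper packages the eigenvector computation slightly differently (via a lemma on $B_\beta^k$ and the normalization $z_n=1$ rather than $z_1=\sigma_1$), and it verifies independence by checking directly that $x^\beta_j\ne0$ and locating when $y^\beta_j=0$, whereas you infer independence from that of the $\nu_\pm$-eigenvectors of $B_\beta$; but these are cosmetic differences. Your closing remark about the $\sigma_1\cdots\sigma_k$ prefactor is apt: it is indeed present in the eigenvectors (and the paper uses it in the exceptional cases and in the later formulas~\eqref{initial}--\eqref{ratios}), even though the displayed generic formulas in the theorem statement suppress it.
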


\begin{proof} The proof begins by establishing properties of $B_\beta$.  Let $\delta_{ij}$ denote the Kronecker delta, $\delta_{ij} = 1$ if $i=j$ and $0$ if $i \ne j$.

\begin{lem}\label{Bpowers}
The elements of $B_\beta^k$ are $b_{\beta ij}$ given by 
$$b_{\beta ij} = \sigma_{j}\sigma_{j+1}\cdots\sigma_{j+k-1} \delta_{i+k,j},$$
where the subscripts are read modulo $n$.  Thus, $B_\beta^k = B^k D$ where $D$ is the diagonal matrix 
$$\diag(\sigma_1\cdots\sigma_k, \ \sigma_2\cdots\sigma_{k+1}, \ \ldots, \ \sigma_n\sigma_1\cdots\sigma_{k-1}).$$
\end{lem}

\begin{proof}
For $k=0, 1$ this is the definition.  (The empty product of signs $\sigma_i$ is equal to $+1$.)  The proof for $k>1$ is by induction.
\end{proof}

\begin{prop}\label{Beigen}
The eigenvector of $B_\beta$ corresponding to the eigenvalue $\nu^\beta_j = \e^{\I \frac{2j+s}{n} \pi},$ $j = 0,1,2,\ldots, n-1$, is $z^\beta_j$ whose components are 
$$
z^\beta_{jk} = \sigma_1\cdots\sigma_k \e^{\I \frac{(2j+s)k}{n} \pi}.
$$
\end{prop}

\begin{proof} By Lemma \ref{Bpowers} and the definition $\sigma_1\cdots\sigma_n = \sigma$, $(B_\beta)^n = \sigma I$.  Therefore, the characteristic polynomial of $B_\beta$ is $\lambda^n-\sigma = \lambda^n - \e^{\I \pi s}$, whose roots are $\e^{\I \frac{2j+s}{n} \pi}$, $j = 0,1,2,\ldots, n-1.$

The eigenvector corresponding to $\nu = \e^{\I \frac{2j+s}{n} \pi}$ is obtained by solving 
$$B_\beta z = (\sigma_2 z_2,\ldots,\sigma_n z_n, \sigma_1 z_1)' = (\nu z_1, \ldots, \nu z_{n-1}, \nu z_n)'$$
where $z = (z_1,\ldots,z_n)'$.  The solution is obtained as in equation \eqref{eigenelements} with the arbitrary choice $z_n=1$.
\end{proof}

The proof of Theorem \ref{eigenvectors} resumes.  $A_\beta$ has the same eigenvectors as $B_\beta$, but as $A_\beta$ is symmetric it has real eigenvalues, so we find real eigenvectors.  The real eigenvectors are the real and imaginary parts of $z^\beta_j$, which gives two eigenvectors, except when the imaginary part $\Im z^\beta_j$ has all components equal to $0$.

The real part of $z^\beta_j$ has components $\cos \frac{(2j+s)k}{n}\pi$ and the imaginary part has components $\sin \frac{(2j+s)k}{n}\pi$.  This gives the general form of $x^\beta_{jk} = \Re z^\beta_{jk} = \cos \frac{(2j+s)k}{n}\pi$ and $y^\beta_{jk} = \Im z^\beta_{jk} = \sin \frac{(2j+s)k}{n}\pi$.  
The real and imaginary parts are duplicated (up to negation) for $j > \frac{n-s}{2}$, by the identity 
\begin{equation}
z^\beta_{n-j-s} = \bar{z}^\beta_j,
\label{compvector}
\end{equation}
which implies that $x^\beta_{n-j-s} =  x^\beta_{j}$ and $y^\beta_{n-j-s} = - y^\beta_{j}$.  
Equation \eqref{compvector} is proved by
\begin{align*}
z^\beta_{n-j-s,k} &= \sigma_1\cdots\sigma_k \e^{\I \frac{(2[n-j-s]+s)k}{n} \pi} 
= \sigma_1\cdots\sigma_k \e^{\I \frac{(2n-2j-s)k}{n} \pi} \\
&= \sigma_1\cdots\sigma_k \e^{\I \frac{-(2j+s)k}{n} \pi} 
= \sigma_1\cdots\sigma_k \overline{\e^{\I \frac{(2j+s)k}{n} \pi}} = \bar z^\beta_{jk}.
\end{align*}

Therefore, eigenvalue $\mu^\beta_j$ has the eigenvectors $x^\beta_j$ and $y^\beta_j$ for $0 \leq j \leq \frac{n-s}{2}$, except when one of them is zero.  

The component $x^\beta_{j1} = \sigma_1\cdots\sigma_k \cos \frac{2j+s}{n}\pi = 0$ if and only if $\frac{(2j+s)2}{n}$ is an odd integer.  Since $0 \leq (2j+s)2 \leq 2n$, that implies $\frac{(2j+s)2}{n}=1$, thus $x^\beta_{j2} = \sigma_1\cdots\sigma_k \cos \pi = \pm1$.  Therefore, $x^\beta_j \neq 0$.  

The imaginary part $y^\beta_j$ is the zero vector when $\sin\frac{(2j+s)k}{n}\pi = 0$ for all $k$, that is, $\frac{2j+s}{n}$ is an integer.  Assuming $0 \leq 2j+s \leq n$, that is the case when $s=0$ and $n$ divides $2j$, so $j=0$, or $n$ is even and $j=\frac{n}{2}$, or when $s=1$ and $n$ is odd and divides $2j+1$, so $n=2j+1$.  These are the exceptional cases stated in the theorem.
\end{proof}

If we suppose that the information available includes the length and sign of the cycle, $n$ and $\sigma$, and an eigenvector $x$ corresponding to eigenvalue $2$ if $\sigma=+1$ or $n$ is even, or an eigenvector $x$ corresponding to eigenvalue $-2$ if $\sigma=-1$ and $n$ is odd, then we can deduce the signs of all edges from Theorem \ref{eigenvectors}.  In the former case $\sigma_k = x_k/x_{k-1}$ and in the latter case $\sigma_k = -x_k/x_{k-1}$, where $x_0$ is defined to be $x_n$.

Suppose the particular information of the previous paragraph is not available.  We can recover the edge signs from an arbitrary eigenvector $x$ using $x_1$ and the ratio $x_k/x_{k-1}$, $1<k\leq n$, if neither component is zero.  The numbers for $x^\beta_j$ are
\begin{align}
x^\beta_{j1} &= \sigma_1 \cos \frac{2j+s}{n}\pi, 
\label{initial}\\
x^\beta_{jk} \big/ x^\beta_{j,k-1} &= \sigma_k \cdot \cos\frac{(2j+s)k}{n}\pi \Big/  \cos\frac{(2j+s)(k-1)}{n}\pi. 
\label{ratios}
\end{align}
Therefore, by determining the sign of the cosine or the ratio of cosines, we can determine $\sigma_k$, $k=1,\ldots,n$.

\begin{thm}\label{edgesigns}
Let $(C_n,\boldsymbol\sigma)$ be a signed cycle graph and let $\sigma = \boldsymbol\sigma(C_n)$.

{\rm(I)}  The largest eigenvalue $\mu$ of $A(C_n,\boldsymbol\sigma)$, if known, determines $\sigma = +1$ if $\mu=2$ and $-1$ if $\mu < 2$. 

{\rm(II)}  Assume known the cosine eigenvector $x^\beta_0$ of $A(C_n,\boldsymbol\sigma)$ corresponding to $\mu$.  (Thus, $n$ is known; but $\mu$ may not be known.)  
The first coordinate $x^\beta_{01}$ determines $\sigma = (-1)^s$ and therefore $s$.  

If $s=0$ or $n$ is odd, $x^\beta_0$ determines the signs $\sigma_k$, $k=1,\ldots,n$, of all edges of $(C_n,\boldsymbol\sigma)$.  

If $s=1$ and $n$ is even, $n=2m$, then $x^\beta_0$ determines the signs $\sigma_k$ of all edges of $(C_n,\boldsymbol\sigma)$ except $\sigma_{m-1}$ and $\sigma_m$.  

{\rm(III)}  Assume known the sine eigenvector $y^\beta_0$ of $A(C_n,\boldsymbol\sigma)$ corresponding to $\mu$.  (Thus, $n$ is known; but $\mu$ may not be known.)  

If $s=1$, then $y^\beta_0$  determines the signs $\sigma_k$ of all edges of $(C_n,\boldsymbol\sigma)$ except $\sigma_1$ and $\sigma_n$.  

{\rm(IV)} If both $x^\beta_0$ and $y^\beta_0$ are known, then all edge signs are determined.
\end{thm}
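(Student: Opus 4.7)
The plan is to exploit the explicit formulas from Proposition \ref{Beigen}, namely
$x^\beta_{0k} = \sigma_1\cdots\sigma_k \cos\frac{sk\pi}{n}$ and
$y^\beta_{0k} = \sigma_1\cdots\sigma_k \sin\frac{sk\pi}{n}$,
and to invert them to recover the edge signs. For Part (I), the largest eigenvalue among $\mu^\beta_j = 2\cos\frac{(2j+s)\pi}{n}$ is achieved at $j=0$, yielding $\mu = 2\cos\frac{s\pi}{n}$, which equals $2$ exactly when $s=0$ and equals $2\cos(\pi/n) < 2$ when $s=1$; this gives (I) immediately.

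For Part (II), first use $|x^\beta_{01}| = \cos\frac{s\pi}{n}$, which is $1$ when $s=0$ and $\cos(\pi/n)$ when $s=1$; knowing $n$, this determines $s$ and hence $\sigma$. Given $s$, for every index $k$ at which the cosine factor $\cos\frac{sk\pi}{n}$ is nonzero, the partial product $\pi_k := \sigma_1\cdots\sigma_k$ is recovered as $x^\beta_{0k}/\cos\frac{sk\pi}{n}$, and then $\sigma_k = \pi_k/\pi_{k-1}$ with the convention $\pi_0 := 1$. The cosine factor is nonzero for all $k \in \{1,\ldots,n\}$ when $s=0$ or when $s=1$ and $n$ is odd, yielding every $\sigma_k$; in the remaining case $s=1,\,n=2m$, the unique vanishing $\cos\frac{m\pi}{n} = 0$ leaves $\pi_m$ unknown, so precisely the two signs whose telescoping involves $\pi_m$ are undetermined, matching the stated exception.

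For Part (III), the sine pattern forces $y^\beta_0 = 0$ when $s=0$, so information is carried only when $s=1$; in that case $\sin\frac{k\pi}{n}$ vanishes on $k\in\{1,\ldots,n\}$ only at $k=n$, and an analogous inversion yields $\pi_k$ for $k=1,\ldots,n-1$, so every sign is recovered except at the two indices whose determination requires $\pi_n$ together with the overall scale of the eigenvector (this pair being $\sigma_1$ and $\sigma_n$ as listed). Part (IV) then follows from the identity
$x^\beta_{0k} + \I y^\beta_{0k} = \pi_k\,\e^{\I sk\pi/n}$:
since the exponential factor never vanishes, each $\pi_k = (x^\beta_{0k} + \I y^\beta_{0k})\,\e^{-\I sk\pi/n}$ is determined, and hence all edge signs. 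The main bookkeeping lies in matching the trigonometric zeros of Parts (II) and (III) with the specific edges left undetermined; once the formulas for $x^\beta_0$ and $y^\beta_0$ are in hand, the surrounding computation is routine.
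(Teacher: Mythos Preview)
Your approach is essentially the paper's: use the explicit coordinates $x^\beta_{0k}=\pi_k\cos\frac{sk\pi}{n}$, $y^\beta_{0k}=\pi_k\sin\frac{sk\pi}{n}$ (with $\pi_k:=\sigma_1\cdots\sigma_k$) coming from Proposition~\ref{Beigen}, and invert. The paper phrases the inversion via consecutive ratios $x^\beta_{0k}/x^\beta_{0,k-1}$ rather than dividing by the trig factor directly, but the content is the same, and your Part~(I) and the core of Part~(II) are fine. Two points deserve tightening.

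\textbf{Part (II), the exceptional pair.} Your analysis correctly shows that when $s=1$ and $n=2m$ the single vanishing cosine occurs at $k=m$, so $\pi_m$ is lost and the two undetermined signs are $\sigma_m=\pi_m/\pi_{m-1}$ and $\sigma_{m+1}=\pi_{m+1}/\pi_m$. This agrees with the paper's \emph{proof}; the theorem \emph{statement} lists $\sigma_{m-1},\sigma_m$, which appears to be a typographical slip. So ``matching the stated exception'' is not literally true---your computation matches the proof, not the statement.

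\textbf{Part (III), the scale issue.} Here your argument is internally inconsistent. You first say the analogous inversion yields $\pi_k$ for $k=1,\ldots,n-1$; but that would give $\sigma_1=\pi_1$ outright, contradicting the claim that $\sigma_1$ is undetermined. The resolution is the one you gesture at: the normalization inherited from $z^\beta_{0n}=1$ forces $x^\beta_{0n}=1$ (so the scale of $x^\beta_0$ is pinned down, which is why $|x^\beta_{01}|$ legitimately detects $s$ in Part~(II)), but it only gives $y^\beta_{0n}=0$, which does \emph{not} fix the scale of $y^\beta_0$. Hence from $y^\beta_0$ alone one recovers only the ratios $\pi_k/\pi_1$ for $1\le k\le n-1$, i.e.\ $\sigma_2,\ldots,\sigma_{n-1}$, and neither $\sigma_1$ nor $\sigma_n$. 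State it that way and the argument is clean.

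\textbf{Part (IV).} Your use of the complex combination $x^\beta_{0k}+\I y^\beta_{0k}=\pi_k e^{\I sk\pi/n}$ is a slightly different (and tidier) route than the paper's, which simply observes that the undetermined signs from $x^\beta_0$ lie among those determined by $y^\beta_0$. Both are valid; yours has the advantage of not needing the case split.
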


\begin{proof}
(I)  The eigenvalues are $\mu^\beta_j = 2\cos\frac{2j+s}{n}\pi$, $0\leq j < \frac{n-s}{2}$.  Since $0 \leq 2j+s < n$, the angle is in the interval $[0,\pi)$.  Thus, the largest eigenvalue is $\mu^\beta_0 = 2\cos\frac{s}{n}\pi$.  This eigenvalue equals $2 \Leftrightarrow s=0$.

(II)  First, consider $x^\beta_{01} = \sigma_1 \cos\frac{s}{n}\pi$.  Because $s < n/2$, $\cos\frac{s}{n}\pi$ is positive so
\begin{equation}
\sigma_1 = \sgn x^\beta_{01}.
\label{sigma01}
\end{equation}
Also, $|x^\beta_{01}| = 1 \Leftrightarrow s=0$, so $x^\beta_0$ determines $s$, which determines $\sigma = (-1)^s$.

If $\cos\frac{s(k-1)}{n}\pi, \cos\frac{sk}{n}\pi \neq 0$, then 
\begin{equation}
\sigma_k = \frac{ x^\beta_{0k} / x^\beta_{0,k-1} }{ \cos\frac{sk}{n}\pi \big/  \cos\frac{s(k-1)}{n}\pi } \ .
\label{sigma0k}
\end{equation}
Therefore, when $s=0$, the edge signs are determined by 
\begin{equation}
\sigma_k = x^\beta_{0k} / x^\beta_{0,k-1}, \ k = 2,\ldots,n .
\label{sigma0k0}
\end{equation}
When $s=1$, 
\begin{equation}
\sigma_k = \frac{ x^\beta_{0k} / x^\beta_{0,k-1} }{ \cos\frac{k}{n}\pi \big/  \cos\frac{k-1}{n}\pi } \ .
\label{sigma0k1}
\end{equation}
If both cosines have the same sign, their quotient is positive.  Since the cosine changes sign at angle $\frac\pi2$, the quotient is positive when $\frac{k}{n} < \frac12$ or $\frac{k-1}{n} > \frac12$, that is, $k<\frac{n}{2}$ or $k-1>\frac{n}{2}$.  Thus, 
\begin{equation}
\sigma_k = \sgn(x^\beta_{0k} / x^\beta_{0,k-1}), \ 2\leq k < \frac{n}{2} \text{ or } \frac{n}{2}+1 < k \leq n,
\label{sigma0k1}
\end{equation}
for any values of $s$ and $n$.

When $n$ is odd, there is one exceptional value $k=\frac{n+1}{2}$, at which the quotient of cosines is negative.  Therefore, 
\begin{equation}
\sigma_m = -\sgn(x^\beta_{0m} / x^\beta_{0,m-1}), \ \text{ when } s=1 \text{ and } n=2m-1.
\label{sigma0k1.5}
\end{equation}

When $s=1$ and $n$ is even, $n=2m$, the value $x^\beta_{0m} = 0$.  Then the quotient of cosines in equation \eqref{sigma0k} is undefined so $\sigma_m$ and $\sigma_{m+1}$ cannot be deduced from that equation.  

(III)  Assume $s=1$.  (Otherwise, no information can be obtained from $y^\beta_0$ because $y^\beta_0=0$.)  
The components of $y^\beta_0$ are $y^\beta_{0k} = \sin\frac{k}{n}\pi$, $k=1,\ldots,n$.  No component is negative.  The only zero components are $y^\beta_{01}=0$ when $k=n$.  Therefore,
\begin{equation}
\sigma_k = \sgn(y^\beta_{0k} / y^\beta_{0,k-1}), \ k = 2,\ldots,n-1.
\label{sigma0ky}
\end{equation}

(IV)  The only signs $\sigma_k$ that may not be determined by $x^\beta_0$ are $\sigma_m$ and $\sigma_{m+1}$ when $s=1$ and $n=2m$.  Since $2\leq m$ and $m+1<n$, these signs are determined by $y^\beta_0$.
\end{proof}

The question arises of how to determine the cosine and sine vectors if only the eigenspace of $\mu^\beta_0$ is known.  If the eigenspace has dimension $1$, then $\sigma=+1$ (and $s=0$) and $x^\beta_0$ is the unique eigenvector whose last component equals $+1$.  If the eigenspace has dimension $2$, so $\sigma=-1$ (and $s=1$), $x^\beta_0$ is only one of an infinite number of vectors whose last component equals $+1$.  All such vectors have the form $x^\beta_0+\alpha y^\beta_0$, $\alpha \in \mathbf R$.  It appears that there is no way to determine $x^\beta_0$ itself from the available information.  

Once the values of all $\sigma_k$, $k=1,\ldots,n$, are known, there is one more practical difficulty.  The sign sequence $\boldsymbol\sigma = (\sigma_1,\sigma_2,\ldots,\sigma_n)$ can depend on the arbitrary choice of edge labeling.  There are $n$ possible labelings if the cycle is oriented, and $2n$ if it is not.  In order to compare different labeled cycles of the same length, one has to rotate the first sign sequence through all patterns, $(\sigma_2,\ldots,\sigma_n,\sigma_1)$ etc.  If any rotation matches the second sequence, the two cycles are isomorphic in the specified orientations, but if not, they are not isomorphic in their orientations.  If orientation is not a concern, the first sequence should also be reversed and then the $n$ rotations compared to the second sequence.  If there is a match, the two cycles are isomorphic, and if not, they are nonisomorphic.  (The amount of work in this comparison is small, of order $n$.)

\section{Conclusion}\label{sec:6}
An attempt is made in this article to come up with a spectral criterion so that the underlying pattern is uniquely determined in the case of a given signed cycle. When classifying chemicals or other items by looking at some patterned matrices in general, such a criterion will help in grouping atoms in the molecule modelled by its adjacency matrix. It is also illustrated that the single number indices are very poor for the purpose of classification. The idea is to come up with less limited but still a small amount of descriptive data, such as one or two eigenvectors of the adjacency matrix of the given molecular structure modeled by a signed graph, so that the underlying pattern can be uniquely determined. This eigenvector method, based on the adjacency matrix, is successful, but it may not yet give the minimum number of specifiers. Further, an extension of this method to wider classes of signed graphs seems to hold potential promise to devise spectral methods to find new topological descriptors for a wide variety of chemical compounds.

\vspace{0.50cm}
\begin{center} \textbf{Acknowledgements} \end{center}
The first author is grateful to the Department of Science and Technology (DST), Government of India, New Delhi, for the financial assistance for this work, done under project-number SR/S4/MS:287/05 as also to the Centre for Mathematical Sciences for providing all the required facilities for the purpose.

\end{document}